\providecommand{\U}[1]{\protect\rule{.1in}{.1in}}
\newtheorem{theorem}{Theorem}
\newtheorem{corollary}[theorem]{Corollary}
\newtheorem{lemma}[theorem]{Lemma}
\newenvironment{proof}[1][Proof]{\noindent\textbf{#1.} }{\ \rule{0.5em}{0.5em}}
\begin{document}

\title{\textsf{Improving Cauchy's Integral Theorem in Constructive Analysis}}
\author{\textsf{Douglas S. Bridges}}
\maketitle

\begin{abstract}%
\noindent
\textsf{In his constructive development of complex analysis, Errett Bishop
used restrictive notions of homotopy and simple connectedness. Working in
Bishop-style constructive mathematics, we prove Cauchy's integral theorem
using the standard notions of such properties. In consequence, Bishop's
theorems in Chapters 5 of \cite{Bishop,BB} hold under our more normal, less
restrictive, definitions.}

\end{abstract}%

\setcounter{secnumdepth}{0}%
\normalfont\sf

\subsection{Introduction}

The primary purpose of this note\footnote{%
\normalfont\sf
MSC 2020 Classifications: 03F60 (Primary), 30Exx (Secondary)} is to provide a
constructive (in Errett Bishop's sense\footnote{\textsf{Bishop's constructive
mathematics is, informally, mathematics carried out using intutionistic logic
and a suitable framework of type- or set-theory \cite[Chs. 1-2]{Handbook}
,\cite{BMorse}}}) proof of \emph{Cauchy's integral theorem} in the form:

\begin{theorem}
\label{oct14t1}Let the piecewise differentiable closed paths $\gamma_{0}$ and
$\gamma_{1}\ $have common parameter interval and be homotopic in the open set
$U\subset\mathbb{C}$. Then $\int_{\gamma_{0}}f=\int_{\gamma_{1}}f $ for each
analytic function $f$ on $U$.
\end{theorem}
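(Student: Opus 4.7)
The plan is a constructive execution of the classical grid argument. Write $H:[0,1]\times[a,b]\to U$ for the homotopy, so that $H(0,\cdot)=\gamma_{0}$, $H(1,\cdot)=\gamma_{1}$ and, because the homotopy is through closed paths, $H(s,a)=H(s,b)$ for every $s$. The first step is to extract a uniform safety margin: a positive $\delta$ such that the closed disk $\overline{D}(H(s,t),2\delta)$ lies in $U$ for every $(s,t)$. Using the openness of $U$ to assign each point of the totally bounded image $K:=H([0,1]\times[a,b])$ a witnessing radius, and then passing to a finite subcover via a suitable net of $K$, produces such a $\delta$.

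With $\delta$ in hand, I would invoke uniform continuity of $H$ on the compact rectangle to choose $n$ so large that $|H(s,t)-H(s',t')|<\delta$ whenever $|s-s'|\le 1/n$ and $|t-t'|\le(b-a)/n$. Setting $z_{ij}:=H(i/n,a+j(b-a)/n)$, let $Q_{ij}$ be the closed polygonal quadrilateral $z_{ij}\to z_{i+1,j}\to z_{i+1,j+1}\to z_{i,j+1}\to z_{ij}$ for each $(i,j)\in\{0,\dots,n-1\}^{2}$. By construction $Q_{ij}$ lies in the open disk $D(z_{ij},\delta)\subset U$, on which $f$ admits a primitive; hence $\int_{Q_{ij}}f=0$. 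Summing over $(i,j)$ causes the interior edges to cancel in pairs, and the identification $H(s,a)=H(s,b)$ ensures that the top and bottom rows of vertices cancel one another as well. What remains is $\int_{P_{1}}f-\int_{P_{0}}f$, where $P_{k}$ is the polygon with consecutive vertices $z_{kn,0},z_{kn,1},\dots,z_{kn,n}$.

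It remains to identify $\int_{P_{k}}f$ with $\int_{\gamma_{k}}f$ for $k=0,1$. Here the piecewise differentiability of $\gamma_{k}$ enters: by refining the $t$-partition if necessary, each subarc of $\gamma_{k}$ over $[a+j(b-a)/n,a+(j+1)(b-a)/n]$ lies, together with its chord, inside a common disk $D(z_{kn,j},\delta)\subset U$; the primitive of $f$ on that disk forces the integrals of $f$ along the subarc and along the chord to coincide. Summing over $j$ gives $\int_{\gamma_{k}}f=\int_{P_{k}}f$, and combining with the previous step yields the theorem.

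The principal obstacle in Bishop's framework is the production of the uniform $\delta$ in the first step. The naive move of setting $\delta$ equal to the distance from $K$ to $\mathbb{C}\setminus U$ is unavailable, since the complement of an open set need not be located constructively. Instead one must work intrinsically from the positive radii that openness of $U$ supplies at each point of $K$, and use total boundedness of $K$ to aggregate these local witnesses into a global positive lower bound. Once $\delta$ is secured, the remainder of the argument reduces to finite bookkeeping over the grid and the standard constructive fact that analytic functions have primitives on open disks.
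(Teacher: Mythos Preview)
Your grid-of-quadrilaterals argument is correct and is the classical textbook route; it differs substantially from the paper's proof. The paper partitions only the homotopy parameter, obtaining intermediate closed curves $\sigma_{t_0}=\gamma_0,\sigma_{t_1},\dots,\sigma_{t_n}=\gamma_1$; each $\sigma_{t_i}$ with $0<i<n$ is then replaced, via the elementary piecewise-linear approximation Lemma~\ref{sep02l1}, by a piecewise differentiable closed curve $\phi_i$ with $\Vert\phi_i-\sigma_{t_i}\Vert\le\varepsilon/6$, and Bishop's existing Lemma~(3.10) of \cite{BB} (stated here as Lemma~\ref{oct10l1})---that two piecewise differentiable closed paths within $\varepsilon/2$ of each other in $K$, where $K_\varepsilon\subset\subset U$, have equal integrals---is applied along the chain $\gamma_0,\phi_1,\dots,\phi_{n-1},\gamma_1$. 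So the paper's contribution is really the trivial Lemma~\ref{sep02l1}, after which Bishop's own machinery closes the argument; your proof is more self-contained but in effect re-derives the content of Lemma~\ref{oct10l1} through the primitive-on-a-disk device and the two-dimensional edge cancellation. One remark on your ``principal obstacle'': under the paper's definition of \emph{homotopic in} $U$, the homotopy is already required to land in a compact $K$ with $K_r\subset U$ for some explicit $r>0$, so your uniform $\delta$ is handed to you by hypothesis; the finite-net aggregation you sketch is unnecessary (and, as written, slightly circular, since the mesh of the net must be chosen before the finitely many local radii are known).
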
%

\noindent
The reader familiar with Bishop's development of complex analysis may feel
that this theorem has already been proved as Theorem (3.12) on page 141 of
\cite{BB}. However, Bishop worked with a more restrictive notion of homotopy,
and hence of simple connectivity, than the one that is standard in topology,
whereas our proof uses the standard notions. To clarify the distinction
between our work below and Bishop's, let's provide some definitions.

Let $\mathbb{F}$ denote either the real line $\mathbb{R}$ or the complex plane
$\mathbb{C}$. For each located subset $K$ of $\mathbb{F}$ and each $r>0 $ we
define%
\[
K_{r}\equiv\{z\in\mathbb{F}:\rho(z,K)\leq r\},
\]
where the distance from $z$ to $K,$%
\[
\rho(z,K)\equiv\inf\left\{  \left\vert z-\zeta\right\vert :\zeta\in
\mathbb{F}\right\}  \text{,}
\]
exists, by definition of located. If $K$ is totally bounded (and hence
located), then $K_{r}$ is compact for each $r>0$; if also $K$ is subset of an
open set $U\subset\mathbb{F}$ such that $K_{r}\subset U$ for some $r>0$, we
say that $K$ is \emph{well contained}\textbf{\ }in\textbf{\ }$U$, which
property we denote by $K\subset\subset U$.

We assume that the reader is familiar with, or can access, Bishop's work on
constructive complex analysis, as can be found in Sections 1--3 of Chapter 5
in \cite{Bishop} and \cite{BB}. However, in contrast to Bishop,\footnote{%
\normalfont\sf
Bishop requires that every path be piecewise differentiable, by definition.}
we shall apply the term \emph{path} to any (uniformly) continuous function
$\gamma$ from a proper compact \emph{parameter interval} of $\left[
a,b\right]  $ in the real line $\mathbb{R}$ into the complex plane
$\mathbb{C}$. The \emph{carrier }of $\gamma$, written $\mathsf{car}(\gamma)$,
is the closure of the range of $\gamma$; since the latter set is totally
bounded, $\mathsf{car}(\gamma)$ is compact and hence located. We say that
$\gamma$ \emph{lies in,}\textbf{\ }or is a\textbf{\ }\emph{path in,} the set
$S\subset\mathbb{C}$ if either $S$ is compact and $\mathsf{car}(\gamma)\subset
S$ or else $S$ is open and $\mathsf{car}(\gamma)\subset\subset S$. If
$\gamma(a)=\gamma(b)$, then $\gamma$ is called a \emph{closed path}.

Let $\gamma_{0}$ and $\gamma_{1}$ be closed paths, with common parameter
interval $\left[  a,b\right]  $, in a compact subset set $K$ of $\mathbb{C}$.
We say that these paths are \emph{homotopic in}\textbf{\ }$K$ if there exists
a continuous function $\sigma:[0,1]\times\left[  a,b\right]  \rightarrow K$
such that for each $t\in\left[  0,1\right]  $, the function $\sigma
_{t}:\left[  a,b\right]  \rightarrow\mathbb{C}$ defined by $\sigma
_{t}(x)\equiv\sigma(t,x)$ is a closed path, $\sigma_{0}=\gamma_{0}$, and
$\sigma_{1}=\gamma_{1}$. The function $\sigma$ is then called a
\emph{homotopy}\textbf{\ }of $\gamma_{0}$ and $\gamma_{1}$. If also
$U\subset\mathbb{C}$ is open and $K\subset\subset U$, then $\gamma_{0}$ and
$\gamma_{1}$ are said to be \emph{homotopic in}\textbf{\ }$U$. If $\gamma_{1}$
is a constant path, then $\gamma_{0}$ is said to be \emph{null-homotopic. }

A subset $U$ of $\mathbb{C}$ is

\begin{itemize}
\item \emph{path connected}\textbf{\ }if any two points of $U$ can be joined
by a path in $U$;

\item \emph{connected}\textbf{\ }if for any two inhabited open subsets
$A,B\ $of $U$ with $U=A\cup B$ there exists $z\in$ $A\cap B$;

\item \emph{simply connected}\textbf{\ }if it is path connected and every
closed path in $U$ is null-homotopic.
\end{itemize}%

\noindent
Note that path connected implies connected and that, \emph{classically,} if
$U$ is both open and connected, then it is path connected.\cite[page 221,
(9.7.2)]{Dieudonne}

Now we can clarify the difference between our definitions and Bishop's. First,
he deals only with the case where $U$ is an open subset of $\mathbb{C} $. In
the definition of `homotopy' he requires that all the paths $\sigma_{t}$ be
piecewise differentiable; he calls $U$ connected if any two points of $U$ can
be joined by a piecewise differentiable path in $U$; and finally, he calls $U$
simply connected if it is connected (in his sense) and every closed path in
$U$ is null-homotopic (in his sense).

\subsection{Extending Cauchy's theorem}

We turn now towards proving Cauchy's integral theorem using our more standard
notion of homotopy. To that end, we first recall that the \emph{norm} of a
uniformly continuous function on a compact set $K\subset\mathbb{C}$ is%
\[
\left\Vert f\right\Vert \equiv\sup\left\{  \left\vert f(x)\right\vert :x\in
K\right\}  ,
\]
and we have two lemmas.

\begin{lemma}
\label{sep02l1}If $f:\left[  0,1\right]  \rightarrow\mathbb{C}$ is a
continuous mapping such that $f(0)=f(1)$, then for each $\varepsilon>0$ there
exists a piecewise differentiable function $g:[0,1]\rightarrow\mathbb{C}$ such
that $g(0)=f(0)=g(1)$ and $\left\Vert f-g\right\Vert \leq\varepsilon$.
\end{lemma}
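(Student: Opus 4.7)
The plan is to approximate $f$ by its piecewise linear interpolant on a sufficiently fine uniform partition of $[0,1]$. Since $f$ is continuous on the compact interval $[0,1]$, it is uniformly continuous, so given $\varepsilon>0$ I first extract a modulus: a $\delta>0$ such that $|s-t|<\delta$ implies $|f(s)-f(t)|\leq\varepsilon$.

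Next I would choose a positive integer $n$ with $1/n<\delta$, set $t_k \equiv k/n$ for $0\leq k\leq n$, and define $g$ on each $[t_{k-1},t_k]$ by the explicit linear formula
\[
g(t) \equiv f(t_{k-1}) + n\,(t-t_{k-1})\bigl(f(t_k)-f(t_{k-1})\bigr).
\]
This $g$ is piecewise linear, hence piecewise differentiable, and agrees with $f$ at every node $t_k$. In particular $g(0)=f(0)$ and $g(1)=f(1)$, so the hypothesis $f(0)=f(1)$ forces $g(0)=g(1)=f(0)$ as required.

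For the uniform estimate, any $t\in[t_{k-1},t_k]$ gives $g(t) = (1-\lambda)f(t_{k-1}) + \lambda f(t_k)$ for some $\lambda\in[0,1]$, so
\[
f(t)-g(t) = (1-\lambda)\bigl(f(t)-f(t_{k-1})\bigr) + \lambda\bigl(f(t)-f(t_k)\bigr).
\]
Both $|t-t_{k-1}|$ and $|t-t_k|$ are at most $1/n<\delta$, so each term has modulus at most $\varepsilon$; taking the supremum over $t\in[0,1]$ then yields $\left\Vert f-g\right\Vert \leq\varepsilon$.

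I do not anticipate a serious obstacle. Uniform continuity on the compact domain is automatic in Bishop's framework, the norm exists as a supremum of a uniformly continuous function on a compact set, and the interpolant is given by an explicit formula that needs no use of countable choice or any non-constructive principle. The only subtlety is to invoke the hypothesis $f(0)=f(1)$ at the right moment, so that the piecewise linear approximant automatically satisfies the closed-path condition $g(0)=g(1)$ without any additional adjustment near the endpoints.
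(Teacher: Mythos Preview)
Your proposal is correct and follows essentially the same approach as the paper: both approximate $f$ by its piecewise linear interpolant on a sufficiently fine partition of $[0,1]$, using uniform continuity to control the error and noting that the interpolant automatically inherits $g(0)=f(0)=f(1)=g(1)$. The only cosmetic differences are that the paper allows an arbitrary partition of mesh $<\delta$ and organises the error bound as $|f(x)-f(x_i)|+|g(x)-f(x_i)|<2\varepsilon/3$ (after taking $\delta$ for $\varepsilon/3$), whereas your convex-combination splitting gives $\|f-g\|\leq\varepsilon$ directly.
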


\begin{proof}
Given $\varepsilon>0$, let $\delta\in\left(  0,1\right)  $ be such that if
$x,x^{\prime}\in\left[  0,1\right]  $ and $\left\vert x-x^{\prime}\right\vert
<\delta$, then $\left\vert f(x)-f(x^{\prime})\right\vert <\varepsilon/3$.
Choose points $x_{0}=0<x_{1}<\cdots<x_{n}=1$ such that $\left\vert
x_{i+1}-x_{i}\right\vert <\delta$ for $0\leq i<n$. For such $i$ and for
$x_{i}\leq x\leq x_{i+1}$ define
\begin{align*}
\lambda_{i}(x)  & =\frac{x-x_{i}}{x_{i+1}-x_{i}},\\
g_{i}(x)  & =(1-\lambda_{i}(x))f(x_{i})+\lambda_{i}(x)f(x_{i+1}).
\end{align*}
Then $\lambda_{i}$, and hence $g_{i}$, is uniformly continuous and
differentiable on $\left[  x_{i},x_{i+1}\right]  $, $g_{i}(x_{i})=f(x_{i})$,
and $g_{i}(x_{i+1})=f(x_{i+1})$. It readily follows that there exists a
piecewise differentiable, and hence (uniformly) continuous function $g:\left[
0,1\right]  \rightarrow\mathbb{C}$ such that $g(x)=g_{i}(x)$ whenever $0\leq
i<n$ and $x\in\lbrack x_{i},x_{i+1}]$. For such $i$ and $x$ we have
$\left\vert x-x_{i}\right\vert <\delta$, so $\left\vert f(x)-f(x_{i}%
)\right\vert <\varepsilon/3$ and therefore%
\begin{align*}
\left\vert f(x)-g(x)\right\vert  & \leq\left\vert f(x)-f(x_{i})\right\vert
+\left\vert g(x)-f(x_{i})\right\vert \\
& <\varepsilon/3+\lambda_{i}(x)\left\vert f(x_{i+1})-f(x_{i})\right\vert \\
& \leq\varepsilon/3+\lambda_{i}(x)\varepsilon/3<2\varepsilon/3\text{.}%
\end{align*}
It follows from this, the density of $%
{\textstyle\bigcup\limits_{k=1}^{n-1}}
\left[  x_{k},x_{k+1}\right]  $ in $\left[  0,1\right]  $, and the uniform
continuity of $f$ and $g$ that $\left\Vert f-g\right\Vert \leq2\varepsilon
/3<\varepsilon$.
\end{proof}%

\bigskip
We state, without proof, a beautiful lemma of Bishop \cite[page 140,
(3.10)]{BB}.

\begin{lemma}
\label{oct10l1}Let $U\subset\mathbb{C}$ be open, $\varepsilon>0$, and $K$ a
compact set such that $K_{\varepsilon}\subset\subset U$. Let $\gamma_{0} $ and
$\gamma_{1}$ be closed, piecewise differentiable paths in $K$ with common
parameter interval $\left[  a,b\right]  $, such that~$\left\vert \gamma
_{0}(t)-\gamma_{1}(t)\right\vert \leq\varepsilon/2$ for all $t\in\left[
a,b\right]  $. Then $\int_{\gamma_{0}}f=\int_{\gamma_{1}}f$ for each analytic
function $f$ on $U$.
\end{lemma}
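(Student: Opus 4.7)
My plan is to cover both paths by a finite chain of disks in $U$ on which $f$ admits a primitive, then telescope. The key geometric fact is that because $K_{\varepsilon} \subset\subset U$, for every $z \in K$ the closed disk of radius $\varepsilon$ about $z$ lies in $U$ (with a positive slack). Combined with the uniform continuity of $\gamma_0$ and $\gamma_1$, this lets me choose a partition $a = t_0 < t_1 < \cdots < t_n = b$ fine enough that, for $j = 0, 1$ and $t \in [t_i, t_{i+1}]$, one has $|\gamma_j(t) - \gamma_0(t_i)| < \varepsilon$; together with the hypothesis $|\gamma_0(t) - \gamma_1(t)| \leq \varepsilon/2$, this forces the two sub-arcs $\gamma_0|_{[t_i, t_{i+1}]}$ and $\gamma_1|_{[t_i, t_{i+1}]}$, as well as the straight segments joining their endpoints, to lie inside a single convex disk $D_i \subset U$.

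On each $D_i$, Cauchy's theorem for a disk (part of Bishop's standard development) supplies a primitive $F_i$ of $f$. Writing $\alpha_i$ for the straight-line segment from $\gamma_1(t_i)$ to $\gamma_0(t_i)$, I would verify the identity
\[
\int_{\gamma_0|_{[t_i,t_{i+1}]}} f \;-\; \int_{\gamma_1|_{[t_i,t_{i+1}]}} f \;=\; \int_{\alpha_{i+1}} f \;-\; \int_{\alpha_i} f,
\]
both sides being equal to $F_i(\gamma_0(t_{i+1})) - F_i(\gamma_1(t_{i+1})) - F_i(\gamma_0(t_i)) + F_i(\gamma_1(t_i))$. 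The integrals on the right are ordinary line integrals, independent of the chosen primitive, so $\int_{\alpha_{i+1}} f$ carries over unchanged to the next step (where it is computed with $F_{i+1}$). Summing over $i = 0, \ldots, n-1$ makes the right-hand side telescope:
\[
\int_{\gamma_0} f - \int_{\gamma_1} f = \int_{\alpha_n} f - \int_{\alpha_0} f,
\]
and this vanishes because $\gamma_0(a) = \gamma_0(b)$ and $\gamma_1(a) = \gamma_1(b)$ make $\alpha_n$ identical to $\alpha_0$.

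The one delicate step, constructively, is the extraction of a uniform disk radius: one must convert the well-containment hypothesis $K_{\varepsilon} \subset\subset U$ into an explicit positive lower bound for the distance from $K$ to the (metric) complement of $U$, and then combine it with moduli of uniform continuity for both paths to produce the partition above. Once such a quantitative modulus is in hand, everything else reduces to the Bishop-level fact that an analytic function on a disk has a primitive, plus a finite telescoping argument. Piecewise differentiability of the paths is used implicitly in the very existence of the path integrals; the closedness of the paths and their common parameter interval enter only at the last step, in identifying $\alpha_0$ with $\alpha_n$.
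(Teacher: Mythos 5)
The paper does not prove this lemma at all: it is quoted verbatim from Bishop--Bridges \cite[page 140, (3.10)]{BB} and explicitly stated ``without proof,'' so there is no in-paper argument to compare yours against. That said, your chain-of-disks argument is correct and is essentially the classical proof underlying Bishop's result: subdivide the parameter interval so that corresponding sub-arcs of $\gamma_{0}$ and $\gamma_{1}$, together with the connecting segments $\alpha_{i}$, sit in a common convex disk $D_{i}\subset U$; integrate via a local primitive $F_{i}$; and telescope, using closedness of the paths to identify $\alpha_{0}$ with $\alpha_{n}$. The step you flag as delicate is in fact immediate from the paper's definition of well-containment: $K_{\varepsilon}\subset\subset U$ \emph{means} that there exists $r>0$ with $(K_{\varepsilon})_{r}\subset U$, so the closed disk of radius $\varepsilon+r$ about any point of $K$ lies in $U$ and each $D_{i}$ is well contained in $U$; no further extraction of a modulus is needed beyond the uniform continuity of the two paths. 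Two small points to tighten: (i) choosing the partition so that $\left\vert \gamma_{0}(t)-\gamma_{0}(t_{i})\right\vert <\varepsilon/2$ only yields $\left\vert \gamma_{1}(t)-\gamma_{0}(t_{i})\right\vert \leq\varepsilon$, so either take the $D_{i}$ of radius $\varepsilon+r/2$ or sharpen the partition constant --- the slack $r$ makes this harmless; (ii) the ingredients you invoke without proof, namely the existence of a primitive of an analytic function on a disk well contained in $U$ and the evaluation $\int_{\eta}f=F_{i}(\eta(\mathrm{end}))-F_{i}(\eta(\mathrm{start}))$ for piecewise differentiable $\eta$ in $D_{i}$, are both available constructively in \cite[Ch.~5, \S 3]{BB} and should be cited explicitly.
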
%

\smallskip
We are now in a position to give our proof of Theorem \ref{oct14t1}.%

\bigskip

\begin{proof}
We may assume that $\gamma_{0}$ and $\gamma_{1}$ are defined on $\left[
0,1\right]  $. There exist a compact set $K\subset\subset U$ and a homotopy
$\sigma:\left[  0,1\right]  ^{2}\rightarrow\mathbb{C}$ between $\gamma_{0}$
and $\gamma_{1}$ in $K$. Pick $\varepsilon>0$ such that $K_{\varepsilon
}\subset\subset U$, and then $\delta\in(0,1)$ such that if $\left(
t,x\right)  ,(t^{\prime},x^{\prime})\in\left[  0,1\right]  ^{2}$ and
$\left\vert \left(  t,x\right)  -(t^{\prime},x^{\prime})\right\vert <\delta$,
then $\left\vert \sigma\left(  t,x\right)  -\sigma(t^{\prime},x^{\prime
})\right\vert <\varepsilon/6$. Choose points $t_{0}=0<t_{1}<\cdots<t_{n}=1$
such that $\left\vert t_{i+1}-t_{i}\right\vert <\delta$; then $\left\Vert
\sigma_{t_{i}}-\sigma_{t_{i+1}}\right\Vert \leq\varepsilon/6$. By Lemma
\ref{sep02l1}, for $1<i<n-1$ there exists a piecewise differentiable function
$\phi_{i}:[0,1]\rightarrow\mathbb{C}$ such that $\phi_{i}(0)=\sigma_{t_{i}%
}(0)=\sigma_{t_{i}}(1)=\phi_{i}(1)$ and $\left\Vert \phi_{i}-\sigma_{t_{i}%
}\right\Vert \leq\varepsilon/6$. Then%
\[
\left\Vert \phi_{i}-\phi_{i+1}\right\Vert \leq\left\Vert \phi_{i}%
-\sigma_{t_{i}}\right\Vert +\left\Vert \sigma_{t_{i}}-\sigma_{t_{i+1}%
}\right\Vert +\left\Vert \phi_{i+1}-\sigma_{t_{i+1}}\right\Vert <\varepsilon
/2.
\]
Also,%
\[
\left\Vert \gamma_{0}-\phi_{1}\right\Vert \leq\left\Vert \sigma_{t_{0}}%
-\sigma_{t_{1}}\right\Vert +\left\Vert \sigma_{t_{1}}-\phi_{1}\right\Vert
\leq\varepsilon/3<\varepsilon/2
\]
and%
\[
\left\Vert \phi_{n-1}-\gamma_{1}\right\Vert \leq\left\Vert \phi_{n-1}%
-\sigma_{t_{n-1}}\right\Vert +\left\Vert \sigma_{t_{n-1}}-\sigma_{t_{n}%
}\right\Vert \leq\varepsilon/3<\varepsilon/2.
\]
It follows from Lemma \ref{oct10l1} that%
\[
\int_{\gamma_{0}}f=\int_{\phi_{1}}f=\int_{\phi_{2}}f=\cdots=\int_{\phi_{n-1}%
}f=\int_{\gamma_{1}}f\text{.}
\]%
\hfill

\end{proof}

\begin{corollary}
\label{oct10c1}If $\gamma$ is a piecewise differentiable, null-homotopic,
closed path in the open set $U\subset\mathbb{C}$, then $\int_{\gamma
}f(z)\mathsf{d}z=0$ for each function $f$ analytic on $U$.
\end{corollary}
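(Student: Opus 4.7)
The plan is to apply Theorem \ref{oct14t1} directly, with $\gamma_{0}\equiv\gamma$ and $\gamma_{1}$ taken to be the constant path witnessing the null-homotopy of $\gamma$. By the definition of null-homotopic given in the introduction, there is a constant path $\gamma_{1}(t)\equiv c$ (with $c\in U$) such that $\gamma_{0}$ and $\gamma_{1}$ share the parameter interval $[a,b]$ of $\gamma$ and are homotopic in $U$ via some continuous $\sigma:[0,1]\times[a,b]\to K\subset\subset U$.

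Next, I would verify that the constant path $\gamma_{1}$ satisfies the piecewise differentiability hypothesis implicitly required to make the line integral well-defined in Bishop's framework: indeed, a constant function is uniformly differentiable on $[a,b]$ with derivative identically $0$. Since the paths $\gamma_{0}=\gamma$ and $\gamma_{1}$ are both piecewise differentiable, closed, share the parameter interval $[a,b]$, and are homotopic in $U$, Theorem \ref{oct14t1} applies and yields
\[
\int_{\gamma}f(z)\,\mathsf{d}z=\int_{\gamma_{1}}f(z)\,\mathsf{d}z.
\]

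Finally, I would compute the right-hand side directly from the definition of the path integral:
\[
\int_{\gamma_{1}}f(z)\,\mathsf{d}z=\int_{a}^{b}f(\gamma_{1}(t))\,\gamma_{1}'(t)\,\mathsf{d}t=\int_{a}^{b}f(c)\cdot 0\,\mathsf{d}t=0.
\]
Combining the two displays gives $\int_{\gamma}f=0$, as required.

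There is no serious obstacle here: the corollary is essentially a packaging of Theorem \ref{oct14t1} together with the trivial observation that integration along a constant path yields zero. The only point that could conceivably require comment is that the new, standard definition of null-homotopy (with merely continuous, rather than piecewise differentiable, intermediate paths $\sigma_{t}$) is precisely what the stronger Theorem \ref{oct14t1} was set up to handle, so the hypothesis "null-homotopic" in the statement of the corollary is to be read in this standard sense.
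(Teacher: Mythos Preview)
Your proposal is correct and is exactly the intended argument: the paper states this corollary without proof, treating it as an immediate consequence of Theorem~\ref{oct14t1} applied with $\gamma_{1}$ a constant path. Your explicit verification that the constant path is piecewise differentiable and that its integral vanishes fills in the obvious details.
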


\begin{corollary}
\label{oct14c1}If $\gamma$ is a piecewise differentiable closed path in a
simply connected open set $U$ in the complex plane, then $\int_{\gamma
}f(z)\mathsf{d}z=0$ for each function $f$ analytic on $U$.
\end{corollary}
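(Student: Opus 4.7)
The plan is to observe that this corollary is essentially an unwrapping of definitions once Corollary \ref{oct10c1} is in hand. By the definition of simple connectedness given in the introduction, a simply connected open set $U$ is path connected and has the property that every closed path in $U$ is null-homotopic. Since $\gamma$ is a closed path in $U$, it is therefore null-homotopic in $U$: there is a continuous $\sigma:[0,1]\times[a,b]\to\mathbb{C}$ with carrier well contained in $U$, with each $\sigma_t$ a closed path, $\sigma_0=\gamma$, and $\sigma_1$ a constant path.

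Next I would combine this with the hypothesis that $\gamma$ is piecewise differentiable. This means $\gamma$ satisfies exactly the hypotheses of Corollary \ref{oct10c1}: it is a piecewise differentiable, null-homotopic, closed path in the open set $U$. Applying Corollary \ref{oct10c1} to $\gamma$ and the analytic function $f$ then yields $\int_{\gamma}f(z)\,\mathsf{d}z=0$.

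There is really no obstacle here beyond being careful about definitions: the work has already been done in Theorem \ref{oct14t1} (which gives homotopy invariance of the integral for arbitrary continuous homotopies, not just piecewise differentiable ones) and in its immediate Corollary \ref{oct10c1}. I would only remark that the use of the \emph{standard} notion of homotopy is precisely what makes this corollary substantive in the constructive setting: unlike in Bishop's treatment, the homotopy witnessing null-homotopy of $\gamma$ is allowed to pass through merely continuous intermediate paths $\sigma_t$, and Theorem \ref{oct14t1} assures us that the integral is nevertheless preserved along such a homotopy.
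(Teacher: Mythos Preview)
Your proposal is correct and matches the paper's approach: the paper states Corollary \ref{oct14c1} as an immediate consequence of Corollary \ref{oct10c1} together with the definition of simple connectedness, which is exactly the unwrapping you give.
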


It follows from our work that, as can be verified by the reader, Bishop's
results that depend on his version of Theorem \ref{oct14t1} \cite[page 141,
(3.14)]{BB} will hold if we interpret `homotopy' and `simply connected'
according to our (standard) definitions, and Bishop's `connected' as our `path connected'.%

\bigskip

%

\bigskip
%

\noindent
\textbf{Author's address: }School of Mathematics \& Statistics, University of
Canterbury, Christchurch 8140, New Zealand \ \ \ \ \ \ \
\hfill
\textbf{email: }\texttt{dsbridges.math@gmail.com}
\end{document}